\definecolor{gr}{rgb}   {0.,   0.69,   0.23 }
\definecolor{bl}{rgb}   {0.,   0.5,   1. }
\definecolor{mg}{rgb}   {0.85,  0.,    0.85}
\definecolor{yl}{rgb}   {0.8,  0.7,   0.}
\definecolor{or}{rgb}  {0.7,0.2,0.2}
\newtheorem{theorem}{Theorem} [section]
\newtheorem{lemma}[theorem]{Lemma}
\newtheorem{proposition}[theorem]{Proposition}
\newtheorem{remark}[theorem]{Remark}
\newtheorem*{acknowledgment}{Acknowledgment}
\DeclareMathOperator*{\supp}{supp}
\newcommand{\noi}{\noindent}
\newcommand{\Z}{\mathbb{Z}}
\newcommand{\R}{\mathbb{R}}
\newcommand{\T}{\mathbb{T}}
\let\Re=\undefined\DeclareMathOperator*{\Re}{Re}
\let\Im=\undefined\DeclareMathOperator*{\Im}{Im}
\newcommand{\E}{\mathbb{E}}
\newcommand{\N}{\mathcal{N}}
\newcommand{\NB}{\mathbb{N}}
\newcommand{\be}{\beta}
\newcommand{\dl}{\delta}
\newcommand{\nb}{\nabla}
\newcommand{\Dl}{\Delta}
\newcommand{\eps}{\varepsilon}
\newcommand{\g}{\gamma}
\newcommand{\G}{\Gamma}
\newcommand{\ld}{\lambda}
\newcommand{\s}{\sigma}
\newcommand{\ft}{\widehat}
\newcommand{\wt}{\widetilde}
\newcommand{\cj}{\overline}
\newcommand{\dd}{\partial}
\renewcommand{\l}{\ell}
\renewcommand{\o}{\omega}
\renewcommand{\O}{\Omega}
\newcommand{\les}{\lesssim}
\newcommand{\jb}[1]
{\langle #1 \rangle}
\newcommand{\ind}{\mathbf 1}
\numberwithin{equation}{section}
\numberwithin{theorem}{section}
\newcommand{\too}{\longrightarrow}
\tikzset{
	dot/.style={circle,fill=black,draw=black,inner sep=0pt,minimum size=0.5mm},
	>=stealth,
	}
\tikzset{
	dot2/.style={circle,fill=black,draw=black,inner sep=0pt,minimum size=0.2mm},
	>=stealth,
	}
\tikzset{
	ddot/.style={circle,fill=white,draw=black,inner sep=0pt,minimum size=0.8mm},
	>=stealth,
	}
\tikzset{decision/.style={ 
        draw,
        diamond,
        aspect=1.5
    }}
\tikzset{dia2/.style
={diamond,fill=white,draw=black,inner sep=0pt,minimum size=1mm},
	>=stealth,
	}
\tikzset{dia/.style
={star,fill=black,draw=black,inner sep=0pt,minimum size=1mm},
	>=stealth,
	}
\tikzset{dia/.style
={diamond,fill=black,draw=black,inner sep=0pt,minimum size=1.3mm},
	>=stealth,
	}
\def\DeclareSymbol#1#2#3{\expandafter\gdef\csname MH@symb@#1\endcsname{\tikz[baseline=#2,scale=0.15]{#3}}}
\def\<#1>{\csname MH@symb@#1\endcsname}
\newcommand{\HS}{\textit{HS}}
\begin{document}
\baselineskip = 13pt

\title[stochastic NLS with non-smooth additive noise]
{On the  stochastic nonlinear Schr\"odinger equations with non-smooth additive noise}

\author[T.~Oh, O.~Pocovnicu, and Y.~Wang]
{Tadahiro Oh, Oana Pocovnicu, and Yuzhao Wang}

\address{
Tadahiro Oh\\
School of Mathematics\\
The University of Edinburgh\\
and The Maxwell Institute for the Mathematical Sciences\\
James Clerk Maxwell Building\\
The King's Buildings\\
 Peter Guthrie Tait Road\\
Edinburgh\\ 
EH9 3FD\\United Kingdom} 

\email{hiro.oh@ed.ac.uk}

\address{
Oana Pocovnicu\\
Department of Mathematics, Heriot-Watt University and The Maxwell Institute for the Mathematical Sciences, Edinburgh, EH14 4AS, United Kingdom}
\email{o.pocovnicu@hw.ac.uk}

\address{
Yuzhao Wang\\
School of Mathematics, 
University of Birmingham, 
Watson Building, 
Edgbaston, 
Birmingham\\
B15 2TT\\
United Kingdom}

\email{y.wang.14@bham.ac.uk}

\subjclass[2010]{35Q55, 60H30}

\keywords{stochastic nonlinear Schr\"odinger equation; well-posedness; dispersive estimate}

\begin{abstract}
We study  the stochastic nonlinear Schr\"odinger equations 
with additive stochastic forcing.
By using the dispersive estimate, 
we present a simple argument, constructing a unique local-in-time solution
 with rougher stochastic forcing than those considered in the literature.

\end{abstract}



%
\maketitle
%

\vspace*{-6mm}

\section{Introduction}
\subsection{Stochastic nonlinear Schr\"odinger equations}

We consider the Cauchy problem of the following 
stochastic nonlinear Schr\"odinger equations (SNLS) with additive  noise:\footnote{Since our interest
is local in time, the defocusing/focusing nature of the equations does not play any role in this paper.
Hence, we simply consider the defocusing equations.}
\begin{equation}
\begin{cases}\label{SNLS1}
i \partial_t u =   \Delta u  -  |u|^{p-1} u  + \phi \xi \\
u|_{t = 0} = u_0, 
\end{cases}
\qquad ( t, x) \in \R_+ \times \R^d, 
\end{equation}

\noi
where $\xi(t, x)$ denotes  a (Gaussian) space-time white noise on $\R_+ \times \R^d$
and $\phi$ is a bounded operator on $L^2(\R^d)$.
We say that $u$ is a solution to \eqref{SNLS1} if it satisfies 
the following mild formulation (= Duhamel formulation):
\begin{align}
u(t) = S(t) u_0+ i \int_0^t S(t - t') |u|^{p-1} u (t') dt' - i \int_0^t S(t - t') \phi \xi (dt'),
\label{SNLS2}
\end{align}

\noi
where $S(t) = e^{-it \Dl} $ denotes the linear Schr\"odinger propagator.
The last term on the right-hand side represents the effect of the stochastic forcing 
and is called the stochastic convolution, which we denote by $\Psi$:
\begin{align}
\Psi(t) :=  - i \int_0^t S(t - t') \phi \xi (dt').
\label{SNLS3}
\end{align}

\noi
In the following, we assume that $\phi \in \HS(L^2; H^s)$
for appropriate values of $s\geq 0$, 
namely, it is a Hilbert-Schmidt
operator from $L^2(\R^d)$ to $H^s(\R^d)$, guaranteeing that 
$\Psi \in C(\R_+; H^s(\R^d))$ almost surely \cite{DZ}.
See Section \ref{SEC:stoconv} for a further discussion on the stochastic convolution $\Psi$.
Previously, de Bouard-Debussche~\cite{DD} studied
\eqref{SNLS1} in the energy-subcritical setting\footnote{Namely, $ 1< p < 1 + \frac{4}{d-2}$ when $d \geq 3$
and $1 < p < \infty$ when $d = 1, 2$.
This guarantees that the scaling-critical regularity $s_\text{crit}$ defined in~\eqref{scaling2}
satisfies $s_\text{crit} < 1$.}
and proved its well-posedness in $H^1(\R^d)$,
assuming that $\phi \in \HS(L^2; H^1)$.
Our main goal in this paper is to present a simple construction
of  a unique local-in-time solution to  \eqref{SNLS1}
with a much rougher noise (and hence a rougher stochastic convolution)
than those considered in \cite{DD}.

Before discussing the well-posedness issue for SNLS \eqref{SNLS1}, 
let us first go over the local well-posedness theory for the following deterministic
nonlinear Schr\"odinger equations (NLS):
\begin{equation}
\begin{cases}\label{NLS1}
i \partial_t u =  \Delta u - |u|^{p-1} u  \\
u|_{t = 0} = u_0, 
\end{cases}
\qquad ( t, x) \in \R \times \R^d.
\end{equation}

\noi
The equation \eqref{NLS1} enjoys  the following dilation symmetry:
\begin{align}
 u(t, x) \longmapsto u^\ld(t, x) = \ld^{-\frac{2}{p-1}} u (\ld^{-2}t, \ld^{-1}x)
 \label{scaling1}
\end{align}

\noi
for  $\ld >0$.
Namely, if $u$ is a solution to \eqref{NLS1}, then the scaled function $u^\ld$ is also a solution to~\eqref{NLS1}
 with the rescaled initial data.
This dilation symmetry induces the following  scaling-critical Sobolev regularity:
 \begin{align}
 s_\text{crit} = \frac d2 - \frac{ 2}{p-1}
\label{scaling2}
 \end{align}
 
 \noi
such that the homogeneous $\dot{H}^{s_\text{crit}}(\R^d)$-norm is invariant
under the dilation symmetry.
This critical regularity $s_\text{crit}$ provides
a threshold regularity for well-posedness and ill-posedness
of \eqref{NLS1}.
Indeed, 
when $s \geq \max(s_\text{crit}, 0)$, 
the Cauchy problem \eqref{NLS1} is known to be locally well-posed in $H^s(\R^d)$
\cite{Tsu, CW}.\footnote{When $p$ is not an odd integer, 
we need to impose an extra assumption such as $p \geq [s] + 1$ due to the non-smoothness of the nonlinearity.
See also Remark \ref{REM:smooth}.
Note that this condition can be relaxed or eliminated in some situations.
See, for example, \cite{Kato}.
}
On the other hand,
it is known that NLS \eqref{NLS1} is ill-posed
in the scaling supercritical regime:  $s < s_\text{crit}$.
See \cite{CCT, Kishimoto,  O17}.

Let us now introduce two important critical regularities. 
When $s_\text{crit} = 0$, we 
say that the Cauchy problem \eqref{NLS1}
is {\it mass-critical}.  This corresponds to the case $p = 1 + \frac 4d$.
When $s_\text{crit} < 0$, i.e.~$p < 1 + \frac 4d$
 (and $s_\text{crit} > 0$, i.e.~$p > 1 + \frac 4d$, respectively),
we say that \eqref{NLS1} is mass-subcritical (and mass-supercritical, respectively).
When $s_\text{crit} = 1$, we 
say that the Cauchy problem \eqref{NLS1}
is {\it energy-critical}.  This corresponds to the case $p = 1 + \frac 4{d-2}$.
When $s_\text{crit} < 1$, i.e.~$p < 1 + \frac 4{d-2}$
 (and $s_\text{crit} > 1$, i.e.~$p > 1 + \frac 4{d-2}$, respectively),
we say that \eqref{NLS1} is energy-subcritical (and energy-supercritical, respectively).
In the following, we use the same terminology for SNLS \eqref{SNLS1}.

One of the main ingredients 
in establishing  local well-posedness of \eqref{NLS1}
is the following Strichartz estimates
\cite{Strichartz, Yajima, GV, KeelTao}:
\begin{equation}
\| S(t) u_0 \|_{L^q_t L^r_x (\R\times \R^d)} \leq C_{d, q, r} \|u_0\|_{L^2_x(\R^d)},
\label{Str1}
\end{equation}

\noi
which holds true for any Schr\"odinger admissible pair
$(q, r)$, satisfying 
\begin{equation}
\frac{2}{q} + \frac{d}{r} = \frac{d}{2}
\label{Str2}
\end{equation}

\noi
 with $2\leq q, r \leq \infty$
and $(q, r, d) \ne (2, \infty, 2)$.
In \cite{DD}, 
de Bouard-Debussche 
used the Strichartz estimates
to show  that the stochastic convolution $\Psi$
almost surely belongs to a right Strichartz space.
As a result, under the assumption that $\phi \in \HS(L^2; H^1)$, 
they proved that SNLS \eqref{SNLS1} is locally well-posed in $H^1(\R^d)$ in the energy-subcritical case:
$1 < p < 1 + \frac{4}{d-2}$ when $d \geq 3$ and $1 < p < \infty$ when $d = 1, 2$.\footnote{In \cite{DD}, 
they also proved global well-posedness of SNLS \eqref{SNLS1}.
The well-posedness issue for  SNLS with multiplicative noise was also considered in the same paper.
See also Cheung-Mosincat \cite{CM} for analogous well-posedness results 
of SNLS with additive and multiplicative noises
in the periodic setting.}
Now, let $s \geq \max (s_\text{crit}, 0)$.
Then,  by slightly modifying the argument in \cite{DD}
with Lemma \ref{LEM:stoconv} below, 
 it is easy to see  that 
SNLS \eqref{SNLS1} is locally well-posed\footnote{Once again, an extra assumption
such as $p \geq [s] + 1$ is needed when $p$ is not an odd integer.} in $H^s(\R^d)$, 
provided that $\phi\in \HS(L^2; H^s)$.
In particular, \eqref{SNLS1} is locally well-posed
in $L^2(\R^d)$
in the mass-(sub)critical case, 
provided that $\phi\in \HS(L^2; L^2)$.
Therefore, we focus our attention on the mass-supercritical case in the following.

We point out that so far we assumed that 
the noise had the same  spatial regularity 
as that of initial data.
On the one hand, the aforementioned ill-posedness results tell us
that we can not take (deterministic) initial data below the scaling-critical regularity $s_\text{crit}$.
On the other hand, we are allowed to take different regularities 
for initial data and the noise.
Indeed, in the following, we  treat rough stochastic noises that 
have regularities below the scaling critical regularity $s_\text{crit}$, 
while keeping (deterministic) initial data above
the scaling critical regularity.

\subsection{Main results}

In the following, we use $s_0$ 
and $s$ to denote the regularities of initial data $u_0$
and the noise (i.e.~$\phi \in \HS(L^2; H^s)$), respectively.
Our main goal is to lower the value of $s$, while keeping $s_0 \geq s_\text{crit}$.
In order to achieve this goal, we work within the $L_x^r$-framework, $r> 2$, 
by exploiting the following dispersive estimate:
\begin{equation}
\| S(t) u_0\|_{L^r_x (\R^d)} \le \frac {C_r}{|t|^{\frac{d}2 - \frac{d}r}}\|u_0\|_{L^{r'}_x(\R^d)}
\label{disp2}
\end{equation}

\noi
for any $2 \leq r \leq \infty$
with $\frac 1r + \frac 1{r'} = 1$.
Another key ingredient is the space-time integrability of the stochastic convolution.
By a small modification of the argument in \cite{DD}, we
show that,  the stochastic convolution $\Psi$ almost surely belongs
to 
\begin{align}
L^q([0, T]; W^{s, r}(\R^d))
\label{gain1}
\end{align}

\noi
for any $1\leq q < \infty$ 
and finite $r \geq 2$ such that $ r \leq \frac{2d}{d-2}$ when $d \ge 3$,
provided that $\phi \in \HS(L^2; H^s)$.
See Lemma~\ref{LEM:stoconv} below.
Note that the pair $(q, r)$ is no longer restricted to 
be Schr\"odinger admissible.
In particular, while keeping $r = \frac{2d}{d-2}$ and sending $q$ to $\infty$, 
we basically gain almost one spatial derivative.\footnote{Recall that $H^{s+1}(\R^d) \subset W^{s, \frac{2d}{d-2}}(\R^d)$.}
This allows us to prove the following improved local well-posedness result.\footnote{Here, 
we use the term ``well-posedness" in a loose sense as in \cite{BO96}.
See also Remark \ref{REM:WP}\,(ii).}

\begin{theorem}\label{THM:1}
\textup{(i) Energy-subcritical case:}\footnote{As we mentioned before, 
we assume that $(d, p)$ satisfies the mass-supercritical condition.} Let $d \geq 1$ and $1 + \frac 4d< p < \infty$.
When $d \geq 3$,  assume that $ p < 1 + \frac{4}{d-2}$ in addition.  

\smallskip

\noi
\textup{(i.a)}
Let  $s_0 \geq  \frac d2 - \frac{d}{p+1}$.
Then, 
given $u_0 \in H^{s_0}(\R^d) $, 
there exists a unique local-in-time solution $u$ to SNLS \eqref{SNLS1}, 
provided that $\phi \in \HS(L^2; L^2)$.
Moreover, the solution $u$ lies in the class:
\begin{align*}
 \Psi & + C([0, T]; L^{p+1}(\R^d))\cap C([0, T]; L^{2}(\R^d))\notag\\
&  \subset C([0, T]; L^{2}(\R^d)), 
\end{align*}

\noi
where $T = T_\o $ is almost surely positive.

\smallskip

\noi
\textup{(i.b)}
Let  $s_0 > s_\textup{crit}$.
Then, 
given $u_0 \in H^{s_0}(\R^d) $, 
there exists a unique local-in-time solution $u$ to SNLS \eqref{SNLS1}, 
provided that $\phi \in \HS(L^2; L^2)$.
Moreover, the solution $u$ lies in the class:
\begin{align*}
 \Psi & + L^q([0, T]; L^{p+1}(\R^d))\cap C([0, T]; L^{2}(\R^d))\notag\\
&  \subset C([0, T]; L^{2}(\R^d)), 
\end{align*}

\noi
where $q = q(d, p) > 2$ is finite and $T = T_\o $ is almost surely positive.

\smallskip

\noi
\textup{(ii) Energy-(super)critical case:} Let $d \geq 3$ and  $p \geq  1 + \frac{4}{d-2}$
be an odd integer.  
Fix $s_0 > s_\textup{crit}$ and $s > s_\textup{crit} - 1$.
Then, 
given $u_0 \in H^{s_0}(\R^d) $, 
there exists a unique local-in-time solution $u$ to SNLS \eqref{SNLS1}, 
provided that $\phi \in \HS(L^2; H^s)$.
Moreover, the solution $u$ lies in the class:
\begin{align*}
\Psi & + C([0, T]; W^{s_1, \frac{2d}{d-2}-\dl}(\R^d))
\cap 
C([0, T]; H^{s_1}(\R^d))\notag \\
& 
\subset C([0, T]; H^{s_1}(\R^d)), 
\end{align*}

\noi
where $s_1=  \min(s_0 - 1, s)$, 
 $\dl = \dl(s_1)> 0$ is sufficiently small, and $T = T_\o $ is almost surely positive.

\end{theorem}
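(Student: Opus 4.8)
The plan is to solve for the ``nonlinear remainder'' $v := u - \Psi$, which (subtracting \eqref{SNLS3} from \eqref{SNLS2}) satisfies
\begin{align}
v(t) = S(t) u_0 + i \int_0^t S(t-t') \, |w|^{p-1} w \,(t') \, dt' =: \Gamma(v)(t),
\qquad w := v + \Psi,
\notag
\end{align}
and to run a Banach fixed point argument for $\Gamma$ on a ball in a suitable function space. The gain of the substitution is that the only stochastic input is $\Psi$, which by Lemma \ref{LEM:stoconv} almost surely lies in $L^{q_0}([0,T]; W^{s,r}(\R^d))$ for every finite $q_0$ and every finite $r \ge 2$ with $r \le \frac{2d}{d-2}$ (when $d \ge 3$); the randomness, and hence the positivity of $T = T_\o$, will enter only through the almost sure finiteness of these norms. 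The unifying idea is to measure $v$ in an $L^r_x$-based space with $r > 2$ and to bound the Duhamel operator by the dispersive estimate \eqref{disp2} rather than by Strichartz, so that the nonlinearity is estimated in the dual exponent $L^{r'}_x$; this is what allows $\Psi$ to be taken below the scaling-critical regularity.

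For part (i) I would take $r = p+1$, for which $\| |w|^{p-1} w \|_{L^{(p+1)'}_x} \les \|w\|_{L^{p+1}_x}^p$, so that \eqref{disp2} gives
\begin{align}
\| \Gamma(v)(t) \|_{L^{p+1}_x}
\les \| S(t) u_0 \|_{L^{p+1}_x}
+ \int_0^t \frac{1}{|t-t'|^{\al}} \, \|v + \Psi\|_{L^{p+1}_x}^p \, dt',
\qquad \al := \frac d2 - \frac{d}{p+1}.
\notag
\end{align}
The energy-subcritical condition $p < 1 + \frac{4}{d-2}$ is exactly equivalent to $\al < 1$, so the kernel $|t-t'|^{-\al}$ is locally integrable. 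In case (i.a) the hypothesis $s_0 \ge \frac d2 - \frac{d}{p+1} = \al$ forces $u_0 \in L^{p+1}$ by Sobolev embedding, hence $S(t)u_0 \in C([0,T]; L^{p+1})$, and I would close the contraction in $C([0,T]; L^{p+1})$: splitting $\|v+\Psi\|_{L^{p+1}}^p \les \|v\|_{L^{p+1}}^p + \|\Psi\|_{L^{p+1}}^p$ and using H\"older in time together with $\Psi \in L^{q_0}([0,T]; L^{p+1})$ produces a contraction factor that is small for $T$ small. In case (i.b), where only $s_0 > s_\textup{crit}$ is assumed and $u_0$ need not lie in $L^{p+1}$, I would instead work in the space-time space $L^q([0,T]; L^{p+1})$ with a finite $q = q(d,p) > 2$: the homogeneous term is placed there by Strichartz estimates, using the subcritical (and, in the mass-supercritical regime, positive) regularity $s_0 > s_\textup{crit}$, while applying Young's inequality in time to the Duhamel term converts the strictly subcritical gap $s_0 - s_\textup{crit} > 0$ into a positive power of $T$. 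In both cases the $\Psi$-terms are absorbed using its space-time integrability, and membership of $v$ in $C([0,T]; L^2)$ is recovered a posteriori from the $L^2$-isometry of $S(t)$ and the inhomogeneous Strichartz estimate fed by the already-controlled $L^{p+1}$-norm.

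For part (ii) the point is to operate at positive regularity $s_1 = \min(s_0 - 1, s)$ in $C([0,T]; W^{s_1, r})$ with $r = \frac{2d}{d-2} - \dl$ for small $\dl > 0$. At $r = \frac{2d}{d-2}$ the dispersive exponent $\al = \frac d2 - \frac dr$ equals exactly $1$; taking $\dl > 0$ makes $\al < 1$ and keeps the Duhamel time integral convergent, which is precisely the role of $\dl$. Since $W^{s_1, \frac{2d}{d-2}}$ sits at the same scaling as $H^{s_1+1}$, the effective regularity is $s_1 + 1 = \min(s_0, s+1)$, which exceeds $s_\textup{crit}$ by the hypotheses $s_0 > s_\textup{crit}$ and $s > s_\textup{crit} - 1$; the problem is therefore effectively subcritical and the contraction closes with a small power of $T$. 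Because $p$ is an odd integer the nonlinearity $|w|^{p-1}w$ is a polynomial in $(w, \overline w)$, so the required bound on $\| |w|^{p-1} w \|_{W^{s_1, r'}}$ follows from the fractional Leibniz (Kato--Ponce) rule together with Sobolev embedding; the stochastic input enters only through $\|\Psi\|_{L^{q_0}([0,T]; W^{s_1, r})}$, which is almost surely finite since $s_1 \le s$ and $r \le \frac{2d}{d-2}$. As in part (i), membership in $C([0,T]; H^{s_1})$ is recovered from Strichartz.

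The main obstacle throughout is the nonlinear estimate for the terms containing $\Psi$. Because the stochastic convolution is taken below the scaling-critical regularity, it cannot be controlled by Sobolev regularity alone, and the whole scheme rests on trading its missing derivatives for the high space-time integrability supplied by Lemma \ref{LEM:stoconv}, all while keeping the dispersive exponent $\al$ strictly below $1$. This is most delicate in part (ii), where one works at the borderline $r \to \frac{2d}{d-2}$: there the convergence of the Duhamel time integral and the validity of the fractional Leibniz estimate must be simultaneously balanced against the size of $\dl$, and it is this balance that determines the regularity $s_1 = \min(s_0-1, s)$ at which the solution can be constructed.
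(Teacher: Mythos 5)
Your proposal is correct and follows essentially the same route as the paper: the Da Prato--Debussche substitution $v = u - \Psi$, a contraction for $\Gamma$ in $C([0,T];L^{p+1})$ via the dispersive estimate with $r=p+1$ for (i.a), in $L^q([0,T];L^{p+1})$ via Strichartz for the linear part and Young's inequality in time for (i.b), and in $C([0,T];W^{s_1,\frac{2d}{d-2}-\dl})$ with the fractional Leibniz rule for (ii), with the $\Psi$-terms absorbed through Lemma \ref{LEM:stoconv} and continuity in $L^2$ (resp.\ $H^{s_1}$) recovered a posteriori from the inhomogeneous Strichartz estimate. This matches Propositions \ref{PROP:NL1} and \ref{PROP:NL2} in the paper, including the role of $\dl$ in keeping the dispersive exponent strictly below $1$.
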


The structure of the mild formulation \eqref{SNLS2}
states that any solution $u$ can be written as\footnote{The decomposition~\eqref{decomp1} 
is often referred to as the Da Prato-Debussche trick \cite{DPD}
in the field of stochastic parabolic  PDEs.
Such an idea also appears in 
McKean \cite{McKean} and 
Bourgain \cite{BO96} in the context of (deterministic) dispersive PDEs with random initial data, preceding \cite{DPD}. See also de Bouard-Debussche~\cite{DD} and Burq-Tzvetkov~\cite{BT1}.
 }
\begin{align}
u =  v + \Psi.
\label{decomp1}
\end{align}

\noi 
We then study the following fixed point problem for the residual term
$v := u - \Psi$:
\begin{align}
v(t) = S(t) u_0+  \int_0^t S(t - t') \N(v+\Psi) (t') dt' ,
\label{SNLS4}
\end{align}

\noi
where $\N(u) = i |u|^{p-1} u$.
In \cite{DD},  de Bouard-Debussche studied the fixed point problem~\eqref{SNLS4} for $v$
in terms of the standard $L_x^2$-based theory for 
NLS \eqref{NLS1}.
In particular, the solution $v$ to \eqref{SNLS4} was constructed 
in $C([0, T]; H^1(\R^d))$ intersected with an appropriate Strichartz space.
In the following, we instead work in the $L^r_x$-framework with $r > 2$
and directly solve the fixed point problem in $C([0, T]; W^{s_1, r}(\R^d))$\footnote{For Theorem \ref{THM:1} (i.b), 
we need to work in $L^q([0, T]; L^{p+1}(\R^d))$.}
by applying the dispersive estimate~\eqref{disp2}.

On the one hand, the spatial regularity $s_1$
of $v$  in Theorem \ref{THM:1}, i.e.~$s_1 = 0$ in~(i) and $s_1 = s_\textup{crit} - 1 + \eps$ in~(ii)
is below the scaling critical regularity $s_\text{crit}$ defined in \eqref{scaling2}
(when $\eps < 1$).
On the other hand, 
given any $ 1\leq r \leq \infty$, we can also consider  
the scaling-critical Sobolev regularity
adapted to the $L^r$-based Sobolev spaces:
\begin{align*}
 s_\text{crit}(r)  = \frac{d}{r} - \frac{2}{p-1}
\end{align*}

\noi
 such that the homogeneous $\dot{W}^{s_\textup{crit}(r), r}$-norm is invariant
under the dilation symmetry \eqref{scaling1}.
Note that we have 
$ s_\text{crit}(r) < s_\text{crit} =  s_\text{crit}(2)$
for $r > 2$.
For example, 
in the energy-(super)critical case, 
the gain of spatial integrability \eqref{gain1} of the stochastic convolution~$\Psi$
allows us to work in the $L^r_x$-based Sobolev space\footnote{For a technical reason, 
we need to take $r = \frac{2d}{d-2}-\dl$ for some small $\dl > 0$ in the proof of Theorem \ref{THM:1}.}
 with $r = \frac{2d}{d-2}$,
thus lowering the critical regularity from $s_\text{crit} = s_\text{crit}(2)$
to $ s_\text{crit}(r) = s_\text{crit} - 1$ with $r = \frac{2d}{d-2}$.
This heuristically explains 
 how the regularity threshold
$s_\textup{crit} - 1 $ appears in Theorem \ref{THM:1}\,(ii).
Moreover, note that, by working only within the $L^r$-based Sobolev space
with $s_1 > s_\text{crit}(r)$, 
we have made the problem
 {\it subcritical}.
Indeed, all the spatial function spaces such as $L^{p+1}(\R^d)$
appearing in Theorem \ref{THM:1} are subcritical in the sense described above.

\begin{remark} \label{REM:WP}\rm
(i) 
Our argument for proving Theorem \ref{THM:1} is of subcritical nature in the sense that the local existence
time $T$ depends on the $H^{s_0}$-norm of initial data
(and a space-time norm of the stochastic convolution).
It is possible to improve 
Theorem \ref{THM:1} (i.b) so that it also holds when $s_0 = s_\text{crit}$
by relying on the critical local well-posedness theory (in terms of initial data).
See Remark~\ref{REM:endpoint}.

\smallskip

\noi
(ii) Theorem \ref{THM:1} establishes existence of unique solutions
to \eqref{SNLS1}.
Note that the (spatial) regularity of the noise is rougher than that of the initial data in Theorem
\ref{THM:1} (i.a) and~(i.b).
As such, the solution inherits the rougher regularity of the noise 
and it only lies in $C([0, T]; L^2(\R^d))$.
The situation is slightly more subtle in 
Theorem \ref{THM:1} (ii).
Also, note that, in view of the aforementioned ill-posedness results, 
the map:  $(u_0, \phi \xi)\mapsto u$
is not continuous,  
when the noise has spatial regularity $s < s_\text{crit}$.
By the use of the Da Prato-Debussche trick, 
 however, 
 the  map sending an enhanced data set $(u_0, \Psi)$
 to a solution $u$ is continuous, 
where the stochastic convolution $\Psi$ is measured in an appropriate space-time function norm.
\end{remark}

\medskip

Our next goal is to  study the Cauchy problem~\eqref{SNLS1}
with random initial data and prove almost sure local well-posedness
for (random) initial data of  lower regularities.
More precisely,  given a function $u_0$ on $\R^d$, we consider 
a randomization of $u_0$ adapted to 
 the so-called  Wiener decomposition~\cite{W}
of the frequency space: $\R^d = \bigcup_{n \in \Z^d} Q_n$, 
where $Q_n$ is the unit cube centered at $n \in \Z^d$.

Let $\psi \in \mathcal{S}(\R^d)$ such that
\begin{equation*}
\supp \psi \subset [-1,1]^d
\qquad \text{and} \qquad\sum_{n \in \Z^d} \psi(\xi -n) \equiv 1
\ \text{ for any }\xi \in \R^d.
\end{equation*}

\noi
Then, given a function $u_0$ on $\R^d$, 
we have
\begin{align*}
u_0 = \sum_{n \in \Z^d} \psi(D-n) u_0,
\end{align*}

\noi
where $\psi(D-n)$ is defined by 
 $\psi(D-n)u_0(x)=\int_{\R^d} \psi (\xi-n)\ft u_0 (\xi)e^{ 2\pi ix\cdot \xi} d\xi$, 
 namely, 
 the Fourier multiplier operator with symbol $\ind_{Q_n}$ conveniently smoothed.
This decomposition leads to the following randomization of $u_0$
adapted to the Wiener decomposition.
Let $\{g_n\}_{n \in \Z^d}$ be a sequence of independent mean-zero complex-valued random variables
(with independent real and imaginary parts),
satisfying the following exponential moment bound:
\begin{align}
\E\big[ e^{\g_1 \Re g_n + \g_2 \Im g_n } \big] \leq e^{c( \g_1^2 + \g_2^2)}
\label{exp1}
\end{align}

\noi
for all $\g_1, \g_2 \in \R$ and  $n \in \Z^d$.
Note that \eqref{exp1} is satisfied by
standard complex-valued Gaussian random variables,
Bernoulli random variables,
and any random variables with compactly supported distributions.
We then  define the  Wiener randomization\footnote{It is also called 
the unit-scale randomization in \cite{DLM}.} of $u_0$ by
\begin{equation}
u_0^\omega  := \sum_{n \in \Z^d} g_n (\omega) \psi(D-n) u_0.
\label{gauss4}
\end{equation}

\noi
Given $u_0 \in H^{s}(\R^d)$, it is easy to see that its Wiener randomization $u_0^\o$
in \eqref{gauss4} lies in $ H^s(\R^d)$
almost surely.
One can also show that, under some non-degeneracy condition,  there is no smoothing upon randomization
in terms of differentiability;
see, for example, Lemma B.1 in \cite{BT1}.
The main point of the randomization \eqref{gauss4} is
its improved integrability.
For example, under
the assumption \eqref{exp1}, 
$u_0^\o$ almost surely belongs to $W^{s, r}(\R^d)$ for any finite $r \geq 2$.
Moreover, by restricting our attention to local-in-time intervals, 
the random linear solution $S(t) u_0^\o$
satisfies the Strichartz estimate \eqref{Str1}
for {\it any} finite $q, r \geq2$ almost surely.
See Lemma \ref{LEM:PStr1} below.
This gain of space-time integrability allows
us to take random initial data at the same low regularity as the stochastic forcing.

\begin{theorem}\label{THM:2}
\textup{(i) Energy-subcritical case:} 
Let $d$ and $p$ be as in Theorem \ref{THM:1} (i).
Then, 
given $u_0 \in L^2(\R^d) $, 
SNLS \eqref{SNLS1} is almost surely locally well-posed
with respect to the Wiener randomization $u_0^\o$ defined in \eqref{gauss4}, 
provided that $\phi \in \HS(L^2; L^2)$.
More precisely, 
there exists a unique local-in-time solution $u = u^\o$ to SNLS \eqref{SNLS1}
with $u|_{t = 0} = u_0^\o$
in the class:
\begin{align*}
S(t) u_0^\o + \Psi 
& + C([0, T]; L^{p+1}(\R^d))
\cap C([0, T]; L^{2}(\R^d))\notag\\
  & \subset C([0, T]; L^{2}(\R^d)), 
\end{align*}

\noi
where $T = T_\o $ is almost surely positive.

\smallskip

\noi
\textup{(ii) Energy-(super)critical case:} 
Let $d$ and $p$ be as in Theorem \ref{THM:1} (ii).
Moreover, let $s = s_\textup{crit} - 1+\eps$
for some small $\eps > 0$.
Then, 
given $u_0 \in H^{s}(\R^d) $, 
SNLS \eqref{SNLS1} is almost surely locally well-posed
with respect to the Wiener randomization $u_0^\o$
defined in \eqref{gauss4},  
provided that $\phi \in \HS(L^2; H^s)$.
More precisely, 
there exists a unique local-in-time solution $u = u^\o$  to SNLS \eqref{SNLS1}
with $u|_{ t= 0} = u_0^\o$ 
in the class: 
\begin{align*}
S(t) u_0^\o  + 
\Psi & +  C([0, T]; W^{s, \frac{2d}{d-2}-\dl}(\R^d))
\cap 
C([0, T]; H^s(\R^d))\notag\\
 & \subset
 C([0, T]; H^s(\R^d)), 
\end{align*}

\noi
where $\dl = \dl(s)> 0$ is sufficiently small 
 and $T = T_\o $ is almost surely positive.

\end{theorem}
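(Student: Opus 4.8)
The plan is to follow the same fixed point analysis as in Theorem \ref{THM:1}, the only new feature being that the randomized linear solution $S(t)u_0^\o$ plays exactly the same role as the stochastic convolution $\Psi$. Since both the initial data and the noise are now random, I use the decomposition
\begin{align*}
u = S(t) u_0^\o + \Psi + v
\end{align*}
and solve the resulting fixed point equation for the nonlinear remainder
\begin{align*}
v(t) = \int_0^t S(t-t')\, \N\big(S(\cdot)u_0^\o + \Psi + v\big)(t')\, dt',
\end{align*}
where $\N(u) = i|u|^{p-1}u$. Writing $w := S(t)u_0^\o + \Psi$, the goal is to solve for $v$ in the same space $X$ as in Theorem \ref{THM:1}, namely $C([0,T]; L^{p+1}) \cap C([0,T]; L^2)$ in case (i) and $C([0,T]; W^{s, \frac{2d}{d-2}-\dl}) \cap C([0,T]; H^s)$ in case (ii).

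The crucial point is that $w$ inherits the full space-time integrability of $\Psi$. By Lemma \ref{LEM:PStr1}, $S(t)u_0^\o$ satisfies the Strichartz estimate \eqref{Str1} for \emph{every} finite pair $q, r \geq 2$ almost surely; combined with Lemma \ref{LEM:stoconv} for $\Psi$, this shows that $w$ lies almost surely in $L^q([0,T]; W^{s, r})$ over the entire range of (not necessarily Schr\"odinger-admissible) exponents used in the proof of Theorem \ref{THM:1}, with finite almost-sure norms. Thus $S(t)u_0^\o$ is controlled by exactly the same norms as $\Psi$, and the two may be treated together as a single enhanced data term $w$.

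With this in hand, the nonlinear estimate is identical to that in Theorem \ref{THM:1}, now with $\Psi$ replaced by $w$. In case (i), I apply the dispersive estimate \eqref{disp2} to close the fixed point for $v$ in $C([0,T]; L^{p+1})$: the pointwise bound $|\N(v+w)| \lesssim |v|^p + |w|^p$ together with the exact exponent relation $p(p+1)' = p+1$ yields a contraction with a factor $T^{1-\al}$, where $\al = \frac d2 - \frac{d}{p+1} < 1$ by the energy-subcritical condition, while the $|w|^p$ term is almost surely finite by H\"older in time since $w \in L^{pq}([0,T]; L^{p+1})$. The additional membership $v \in C([0,T]; L^2)$ then follows \emph{a posteriori} from the inhomogeneous Strichartz estimate with the admissible pair $(q, p+1)$, using the $L^\infty_T L^{p+1}$ bound on $v$ (which embeds into $L^\rho_T L^{p+1}$ on the bounded time interval) together with the integrability of $w$. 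In case (ii), where $p$ is an odd integer, I expand $\N(v+w)$ as a polynomial in $v, \cj v, w, \cj w$ and estimate each monomial using the fractional Leibniz rule in $W^{s, r}$ with $r = \frac{2d}{d-2}-\dl$, distributing the $s$ derivatives among the factors and placing the $v$-factors in $X$ and the $w$-factors in the improved-integrability spaces via H\"older; the subcriticality condition $s > s_\textup{crit} - 1$ ensures $s > s_\textup{crit}(r)$ for $\dl$ sufficiently small, again supplying a positive power of $T$.

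The main obstacle is, as in Theorem \ref{THM:1}, the nonlinear estimate in the energy-(super)critical case (ii). The odd-integer hypothesis on $p$ is essential here, since it lets $|u|^{p-1}u$ be written as a genuine polynomial, so that the multilinear fractional Leibniz rule applies without the loss caused by non-smooth nonlinearities; every mixed monomial then carries at least one factor of $w$, whose arbitrarily high finite integrability compensates for the low regularity $s = s_\textup{crit} - 1 + \eps$. The difference estimate required for the contraction follows from the same multilinear expansion, and a standard contraction argument closes the fixed point on a ball in $X$ for $T = T_\o$ small. Positivity of $T_\o$ holds almost surely thanks to the finiteness of all the relevant norms of $w$, which is exactly the content of Lemmas \ref{LEM:PStr1} and \ref{LEM:stoconv}.
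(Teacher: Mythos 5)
Your proposal is correct and follows essentially the same route as the paper: the paper sets $\wt \Psi := S(t)u_0^\o + \Psi$ (your $w$), notes via Lemmas \ref{LEM:stoconv} and \ref{LEM:PStr1} that $\wt\Psi$ enjoys the same differentiability and space-time integrability as $\Psi$, and then reruns the fixed point argument of Theorem \ref{THM:1} for $v = u - \wt\Psi$ with $\Psi$ replaced by $\wt\Psi$ (Proposition \ref{PROP:NL3}). Your treatment of the two cases, including the a posteriori $C([0,T];L^2)$ membership via the inhomogeneous Strichartz estimate and the use of the odd-integer hypothesis for the fractional Leibniz rule in case (ii), matches the paper's argument.
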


In view of the probabilistic Strichartz estimates (Lemma \ref{LEM:PStr1}), 
we see that $\wt \Psi : = S(t)u_0^\o + \Psi$ solving
\begin{equation}
\begin{cases}\label{SNLS5}
i \partial_t \wt \Psi =   \Delta  \wt \Psi  + \phi \xi \\
\wt \Psi|_{t = 0} = u_0^\o, 
\end{cases}
\end{equation}

\noi
satisfies the same regularity properties, both in terms of differentiability
and integrability, as the stochastic convolution $\Psi$
in \eqref{SNLS3}.
Then, by decomposing $u$ as
\begin{align*}
u =   v + \wt \Psi, 
\end{align*}

\noi
Theorem \ref{THM:2} follows
from repeating  the argument in the proof of Theorem \ref{THM:1}.

We conclude this introduction with several remarks.

\begin{remark}\label{REM:smooth}\rm 
In Theorems \ref{THM:1} and \ref{THM:2}, 
we assumed that $p$ is an odd integer in the energy-(super)critical case.
One may apply the fractional chain rule
\cite{ChristW} and remove this restriction in certain situations.
For conciseness of the presentation, however, 
we do not pursue this direction in this paper.

\end{remark}

\begin{remark}\rm
In recent years, 
the well-posedness issue
of  the deterministic NLS \eqref{NLS1}
 with respect to the random initial data $u_0^\o$ in \eqref{gauss4}
has been studied intensively \cite{BOP1, BOP2, Bre, OOP, BOP3, DLM2}.
The main idea is to study the fixed point problem 
for the residual term $v = u - S(t) u_0^\o$,
utilizing (a variant of) the Fourier restriction norm method \cite{BO1, HHK, HTT1}
and carrying out rather tedious case-by-case analysis.
In a recent paper \cite{PW}, 
the second and third authors studied
the deterministic NLS \eqref{NLS1} with the random initial data $u_0^\o$ in  \eqref{gauss4}
by exploiting the dispersive estimate \eqref{disp2}.
In particular, they proved Theorem \ref{THM:2} above 
when $\phi = 0$, i.e.~when there is no stochastic noise.
This argument with the dispersive estimate
bypasses case-by-case analysis, which is closer in sprit
to the almost sure local well-posedness argument
for the nonlinear wave equations \cite{BT1, LM, POC, OP}.
See a survey paper \cite{BOP4} for a further discussion on the subject.

\end{remark}

\begin{remark}\rm 

In \cite{CPoc}, the second author with Cheung
 studied SNLS \eqref{SNLS1} on $\R^d$, $d\ge3$,  with the cubic nonlinearity ($p = 3$).
By adapting the argument in \cite{BOP2} for the deterministic NLS with random initial data, 
they proved local well-posedness of \eqref{SNLS1} with stochastic forcing below
the scaling-critical regularity, i.e. $\phi \in \HS(L^2; H^s)$ with $s < s_\text{crit}$.
Moreover, their work 
shows that
the residual part $v = u - \Psi$ lies in  
$C([0, T]; H^{s_\text{crit}}(\R^d))$.

\end{remark}

\smallskip

\noi
{\bf Notations:}
Given $T > 0$, we set $L^q_T B_x = L^q([0, T]; B(\R^d))$
and $C_T B_x = C([0, T]; B(\R^d))$, where $B(\R^d)$ denotes a Banach space of functions on $\R^d$.

\section{On the stochastic convolution}\label{SEC:stoconv}

In this section, we study the regularity properties of the stochastic convolution $\Psi$ in~\eqref{SNLS3}.
Let us first recall the definition of 
a cylindrical Wiener process $W$ on $L^2(\R^d)$.
Fix an orthonormal basis $\{e_n \}_{n \in \NB}$ of $L^2(\R^d)$.
Then, 
a cylindrical Wiener process $W$ on $L^2(\R^d)$ is defined by the following random Fourier series:
\begin{align*}
 W(t) & =  \sum_{n\in \NB}
\be_n (t) e_n, 
\end{align*}

\noi
where $\{ \be_n\}_{n \in \NB}$ is a family of  mutually independent complex-valued Brownian
motions.
In terms of the cylindrical Wiener process $W$, 
we can express the  stochastic convolution $\Psi$ in~\eqref{SNLS3}
as
\begin{align}
\Psi(t) =  - i \int_0^t S(t - t') \phi dW (t')
= - i  \sum_{n \in \NB} 
\int_0^t S(t - t') \phi(e_n)  d\beta_n (t').
\label{stoconv1}
\end{align}


By slightly modifying the argument in \cite{DD}, we have the following lemma.

\begin{lemma}\label{LEM:stoconv}
Suppose that $\phi \in \HS(L^2; H^s)$ for some $s \in \R$.
Then, the following statements hold almost surely:

\smallskip

\begin{itemize}
\item[\textup{(i)}]
$\Psi \in C(\R_+; H^s(\R^d))$, 

\smallskip

\item[\textup{(ii)}]  
Given any  $1 \leq q < \infty$ and finite $r \geq 2$ such that $ r \le \frac{2d}{d-2}$ when $d \geq 3$, 
we have
$\Psi \in L^q([0, T];  W^{s, r}(\R^d))$
for any $T > 0$.

\end{itemize}

\end{lemma}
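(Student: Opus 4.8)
The plan is to reduce part (ii) to the case $s=0$ and then estimate the moments of $\Psi(t,x)$, exploiting the Gaussianity of the stochastic convolution together with the Strichartz estimate \eqref{Str1}. Part (i) is the standard regularity statement for a stochastic convolution driven by a Hilbert--Schmidt operator into $H^s$; it follows from the unitarity of $S(t)$ on $H^s$ as in \cite{DZ}, so I focus on (ii).

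First I would reduce to $s=0$. Since $\langle D\rangle^s$ commutes with $S(t-t')$ and may be brought inside the stochastic integral in \eqref{stoconv1}, one has $\langle D\rangle^s\Psi=\Psi_0$, where $\Psi_0$ is the stochastic convolution associated with $\phi_0:=\langle D\rangle^s\phi$. Because $\|\phi_0 f\|_{L^2}=\|\phi f\|_{H^s}$, we have $\phi_0\in\HS(L^2;L^2)$ with $\|\phi_0\|_{\HS(L^2;L^2)}=\|\phi\|_{\HS(L^2;H^s)}$, and $\|\Psi\|_{W^{s,r}}=\|\Psi_0\|_{L^r}$. Thus it suffices to show: if $\phi\in\HS(L^2;L^2)$, then $\Psi\in L^q([0,T];L^r(\R^d))$ almost surely for every finite $q\ge 1$ and every admissible $r$. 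For fixed $(t,x)$, the series \eqref{stoconv1} exhibits $\Psi(t,x)$ as a mean-zero complex Gaussian, and the It\^o isometry together with the independence of the $\beta_n$ gives, for a.e.\ $x$,
\[
\E|\Psi(t,x)|^2=c\sum_n\int_0^t |S(t-t')\phi(e_n)(x)|^2\,dt'=:G(t,x)^2 .
\]
By the moment bound for Gaussians, $\|\Psi(t,x)\|_{L^p_\o}\les \sqrt p\,G(t,x)$ for every finite $p$. Fixing $p\ge\max(q,r)$ and applying Minkowski's inequality to move $L^p_\o$ inside $L^q_T L^r_x$, we obtain $\|\Psi\|_{L^p_\o L^q_T L^r_x}\les \sqrt p\,\|G\|_{L^q_T L^r_x}$ (all manipulations being justified by Fubini--Tonelli).

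It therefore remains to bound $\|G\|_{L^q_T L^r_x}$, and the decisive observation is that it suffices to control $\sup_{0\le t\le T}\|G(t)\|_{L^r_x}$: on the finite interval this yields $\|G\|_{L^q_T L^r_x}\le T^{1/q}\sup_t\|G(t)\|_{L^r_x}$ for \emph{every} finite $q$, so the gain in time integrability comes for free. To bound $\|G(t)\|_{L^r_x}$ uniformly in $t$, I would apply Minkowski's integral inequality (valid since $r/2\ge 1$) to move the $L^{r/2}_x$ norm inside the sum and the $t'$-integral, reducing $\|G(t)\|_{L^r_x}^2$ to $\sum_n\int_0^t\|S(t-t')\phi(e_n)\|_{L^r_x}^2\,dt'$. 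For $2\le r\le\frac{2d}{d-2}$ (any finite $r\ge 2$ when $d\le 2$) there is a Schr\"odinger-admissible pair $(q_0,r)$ with $q_0\ge 2$; H\"older in $t'$ on $[0,T]$ followed by the Strichartz estimate \eqref{Str1} gives
\[
\int_0^t\|S(t-t')\phi(e_n)\|_{L^r_x}^2\,dt'\les T^{1-2/q_0}\|\phi(e_n)\|_{L^2}^2
\]
(the case $r=2$, i.e.\ $q_0=\infty$, being immediate by unitarity of $S(t)$ on $L^2$). Summing over $n$ yields $\sup_t\|G(t)\|_{L^r_x}^2\les T^{1-2/q_0}\|\phi\|_{\HS(L^2;L^2)}^2<\infty$. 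A finite moment forces $\Psi\in L^q_T W^{s,r}_x$ almost surely, and intersecting over a countable exhausting family of $(q,r,T)$ (using monotonicity in $q$ and $T$ on finite intervals) upgrades this to the stated ``for all'' conclusion.

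The main difficulty is conceptual rather than computational: one must recognize that the admissibility constraint $r\le\frac{2d}{d-2}$ is exactly what lets Strichartz control the spatial $L^r$-norm (one cannot push $r$ beyond the endpoint without spending derivatives that the assumption $\phi\in\HS(L^2;H^s)$ does not provide), whereas the passage from the admissible exponent $q_0$ to all finite $q$ costs nothing, thanks to the Gaussian structure and the bounded time interval.
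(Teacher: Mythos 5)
Your argument is correct and follows essentially the same route as the paper: pointwise Gaussian moment equivalence $\|\cdot\|_{L^\rho(\Omega)}\sim\|\cdot\|_{L^2(\Omega)}$, Minkowski's inequality to exchange $L^\rho(\Omega)$ with $L^q_TL^r_x$ and then with $\ell^2_n L^2_\tau$, and H\"older in time followed by the Strichartz estimate for the admissible pair $(\widetilde q, r)$ to land on $\|\phi\|_{\HS(L^2;H^s)}$. The one point where the paper is more careful is that it first approximates $\phi$ by $\phi_k\in\HS(L^2;H^{s+\sigma})$ with $\sigma>\frac d2$ so that $\langle\nabla\rangle^s\Psi_k(t,\cdot)$ is continuous and pointwise evaluation (hence the Gaussianity of $\langle\nabla\rangle^s\Psi_k(t,x)$) is unambiguous, then passes to the limit in $L^\rho(\Omega;L^q_TW^{s,r}_x)$; your ``for a.e.\ $x$'' plus Fubini--Tonelli addresses the same measurability issue, only less explicitly.
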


Compare Part (ii) of Lemma \ref{LEM:stoconv} with \cite{DD}, where $(q, r)$ was restricted to be Schr\"odinger admissible.
It is this gain of integrability in time which allows us to prove Theorems \ref{THM:1} and \ref{THM:2}.

\begin{proof}
For (i), see \cite{DZ}.
Set $\jb{\nb} = \sqrt{1-\Dl}$.
Given $\phi \in \HS(L^2; H^s)$, 
let $\{ \phi_k\}_{k \in \NB} \subset \HS(L^2; H^{s+\s})$, $\s > \frac d2$, 
such that $\phi_k$ converges to $\phi$ in $\HS(L^2; H^s)$.
Then, letting $\Psi_k$ denote the stochastic convolution
in \eqref{stoconv1}
with $\phi$ replaced by $\phi_k$, 
we see that 
$\Psi_k$ converges to $\Psi$ in $C(\R_+; H^s(\R^d))$
and that 
$\jb{\nb}^s \Psi_k \in C(\R_{+}; H^\s(\R^d)) \subset C(\R_+; C(\R^d))$, 
where the inclusion follows from Sobolev's embedding theorem.

Fix $(t, x) \in \R_+ \times \R^d$.
Then, from \eqref{stoconv1}, 
we see that, as  a linear combination of independent Wiener integrals, 
$\jb{\nb}^s\Psi_k(t, x)$ is a mean-zero complex-valued Gaussian random variable
with variance $\s_k(t, x) = \|\jb{\nb}^s \Psi_k(t, x)\|_{L^2(\O)}^2$.
Recall that, for  a mean-zero complex-valued Gaussian random variable $g$
with variance $\s$, we have
\begin{align}
 \E\big[|g|^{2j} \big] = j! \cdot \s^j. 
 \label{stoconv1a}
\end{align}

\noi
Given $\rho \geq 2$,
let $\wt \rho$  denote the smallest even integer such that  $\wt \rho \geq \rho$.
Then, by H\"older's inequality and \eqref{stoconv1a}, 
we have\footnote{In fact, the following estimate
holds true:
\[\|\jb{\nb}^s \Psi_k(t, x)\|_{L^\rho(\O)}
\leq  \|\jb{\nb}^s \Psi_k(t, x)\|_{L^2(\O)}\]

\noi
Namely, there is no constant depending on $\rho \geq 2$.
See 
\cite[Theorem~I.22]{Simon}.
For our purpose, however, the elementary argument in \eqref{stoconv2} suffices.
}
\begin{align}
\|\jb{\nb}^s \Psi_k(t, x)\|_{L^\rho(\O)}
& \leq  \|\jb{\nb}^s \Psi_k(t, x)\|_{L^{\wt \rho}(\O)}\notag\\
& =  C_\rho \|\jb{\nb}^s \Psi_k(t, x)\|_{L^2(\O)}\notag\\
&  \sim  \bigg\| \bigg(
\int_0^t |S(t - t')\jb{\nb}^s  \phi_k(e_n)(x)|^2 dt' \bigg)^\frac{1}{2}\bigg\|_{\l^2_n}\notag\\
& =  \| S(\tau)\jb{\nb}^s  \phi_k(e_n)(x)  \|_{\l^2_nL^2_\tau([0, t])}
\label{stoconv2}
\end{align}

\noi
for any $\rho \geq 2$ and $(t, x) \in \R_+ \times \R^d$.

Now,  fix $1 \leq q < \infty$ and finite $r \geq 2$ such that $ r \le \frac{2d}{d-2}$ when $d \geq 3$.
Let $\wt q = \wt q(d, r) \geq 2 $ be the unique index such that 
$(\wt q, r)$ is Schr\"odinger admissible, satisfying \eqref{Str2}.
Then, for $\rho \geq \max (q, r)$, 
it follows from 
 Minkowski's integral inequality and \eqref{stoconv2}
 that 
\begin{align*}
\big\| \| \Psi_k\|_{L^q_T W^{s, r}_x}\big\|_{L^\rho(\O)}
& \leq 
\big\| \| \jb{\nb}^s \Psi_k(t, x) \|_{L^\rho(\O)}\big\|_{L^q_T L^r_x}\notag\\
& \leq C_\rho 
\big\| 
 \|S(\tau)\jb{\nb}^s  \phi_k(e_n)(x)  \|_{\l^2_nL^2_\tau([0, t])}\big\|_{L^q_T L^r_x}\notag\\
\intertext{By Minkowski's integral inequality (with $r \geq2$), 
H\"older's inequality in time,
and then applying the Strichartz estimate \eqref{Str1}, }
& \leq T^\frac{1}{q}  
\big\| 
 \|S(\tau)\jb{\nb}^s  \phi_k(e_n)  \|_{L^2_\tau([0, T]; L^r_x) }\big\|_{ \l^2_n}\notag\\
& \leq T^{\theta}
\big\| 
 \|S(\tau)\jb{\nb}^s  \phi_k(e_n)  \|_{L^{\wt q}_\tau([0, T]; L^r_x) }\big\|_{ \l^2_n}\notag\\
& \leq T^{\theta}
\big\| 
 \| \phi_k(e_n) \|_{H^s_x }\big\|_{ \l^2_n}\notag\\
&  = T^{\theta}
\| \phi_k\|_{\HS(L^2; H^s)} < \infty
\end{align*}

\noi
for some $\theta = \theta (q, \wt q) > 0$.
Similarly, we have
\begin{align*}
\big\| \| \Psi_k - \Psi_j \|_{L^q_T W^{s, r}_x}\big\|_{L^\rho(\O)}
\leq C T^{\theta} \| \phi_k - \phi_j \|_{\HS(L^2; H^s)}
\too 0, 
\end{align*}

\noi
as $k, j \to \infty$.
Namely, $\{ \Psi_k \}_{k \in \NB}$ is a Cauchy sequence
in $L^\rho(\O; L^q([0, T]; W^{s, r}(\R^d)))$.
By the uniqueness of the limit, 
we conclude that 
 $\{ \Psi_k \}_{k \in \NB}$ converges to $\Psi$ 
in $L^\rho(\O; L^q([0, T]; W^{s, r}(\R^d)))$.
In particular, we have
\begin{align*}
\big\| \| \Psi \|_{L^q_T W^{s, r}_x}\big\|_{L^\rho(\O)}
\leq C T^{\theta} \| \phi \|_{\HS(L^2; H^s)} < \infty.
\end{align*}

\noi
This proves (ii).
\end{proof}

\section{Proof of 
Theorems \ref{THM:1} and \ref{THM:2}}

In this section, we present the proofs of our main results (Theorems \ref{THM:1}
and \ref{THM:2}).
We first recall the following nonhomogeneous Strichartz estimate;
let $(q, r)$ and $(\wt q, \wt r)$ be Schr\"odinger admissible.
Then, we have
\begin{align}
\bigg\| \int_0^t S(t - t') F(t') dt' \bigg\|_{L^q_t L^r_x} \les \| F\|_{L^{\wt q'}_t L^{\wt r'}_x}, 
\label{Str3}
\end{align}

\noi
where ${\wt q}'$ and $\wt r'$ denote the H\"older conjugates of $\wt q$
and $\wt r$, respectively.

\subsection{Proof of Theorem \ref{THM:1}}

 Let $s_0, s \in \R$ to be specified later.
Given $u_0 \in H^{s_0}(\R^d)$ and $\phi \in \HS(L^2; H^s)$, we define $\G = \G_{u_0, \phi,\,  \xi}$ by 
\begin{equation*}
\G v(t): =S(t) u_0 +  \int_0^t S(t-t') \N (v+\Psi)(t') dt'.
\end{equation*}

\noi
Then,
we have  the following nonlinear estimates.

\begin{proposition}\label{PROP:NL1}
Let $d$ and $p$ be as in Theorem \ref{THM:1}.
We set
\begin{itemize}
\item[\textup{(i.a)}] $s_0\geq \frac{d}{2} - \frac{d}{p+1}$, $s_1 = 0$,  $r = p+1$ in the energy-subcritical case, 

\item[\textup{(ii)}] $s_0 -1 \geq s_1 >  s_\textup{crit}-1$ 
 and $r = \frac{2d}{d-2}-\dl$ 
for some small $\dl = \dl(s_1) > 0$ in the energy-(super)critical case.

\end{itemize}

\noi
Then, the following estimates hold for some $q\gg 1$:
\begin{align}
\| \G v \|_{C_T W^{s_1, r}_x}
&\les \|u_0\|_{H^{s_0}} 
 + T^\theta\Big(
\| v\|_{C_T W_x^{s_1, r}}^p
+ \| \Psi\|_{L^q_T W_x^{s_1, r}}^p\Big), \notag
\\
\| \G v_1 - \G v_2  \|_{C_T W^{s_1, r}_x}
&\les 
  T^\theta\Big(
\| v_1\|_{C_T W_x^{s_1, r}}^{p-1}
+ \| v_2\|_{C_T W_x^{s_1, r}}^{p-1} \notag\\
& \hphantom{XXXXXXXXX}
+ \| \Psi\|_{L^q_T W_x^{s_1, r}}^{p-1}\Big)
\| v_1 - v_2\|_{C_T W_x^{s_1, r}}
\label{nonlin0b}
\end{align}

\noi
for all $v, v_1, v_2 \in C([0,T]; W^{s_1,r}(\R^d))$ and $T>0$.
Moreover, we have $\G v \in C([0,T]; H^{s_1}(\R^d))$ 
for all $v \in C([0,T]; W^{s_1,r}(\R^d))$,
 $\Psi \in L^q([0,T]; W^{s_1,r}(\R^d))$, and $T>0$.

\end{proposition}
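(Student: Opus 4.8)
The plan is to estimate $\G v$ by splitting it into the linear evolution $S(t)u_0$ and the Duhamel term, handling the former by unitarity together with a Sobolev embedding and the latter by the dispersive estimate \eqref{disp2}. Since $S(t)$ is unitary on $H^{s_0}(\R^d)$, I would bound $\|S(t)u_0\|_{W^{s_1,r}_x}\les \|S(t)u_0\|_{H^{s_0}} = \|u_0\|_{H^{s_0}}$ using the Sobolev embedding $H^{s_0}(\R^d)\embeds W^{s_1,r}(\R^d)$; this embedding holds precisely because $s_0 - s_1 \geq d(\tfrac12 - \tfrac1r)$, which in case (i.a) is the hypothesis $s_0\geq \tfrac d2 - \tfrac{d}{p+1}$, and in case (ii) follows from $s_0 - s_1\geq 1 > d(\tfrac12 - \tfrac1r)$ (the last inequality using $r<\tfrac{2d}{d-2}$). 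This yields the $\|u_0\|_{H^{s_0}}$ contribution, uniformly in $t\in[0,T]$.

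For the Duhamel term, writing $w = v + \Psi$ and commuting $\jb{\nb}^{s_1}$ past $S(t-t')$, I would apply the dispersive estimate \eqref{disp2} pointwise in $t'$:
\begin{align*}
\Big\| \jb{\nb}^{s_1}\!\int_0^t S(t-t')\N(w)(t')\,dt'\Big\|_{L^r_x}
\les \int_0^t |t-t'|^{-\al}\, \|\N(w)(t')\|_{W^{s_1,r'}_x}\,dt',
\end{align*}
where $\al = d(\tfrac12 - \tfrac1r)$ and $\tfrac1r + \tfrac1{r'}=1$. The central point is that $\al<1$: in case (i.a) this is $\tfrac d2 - \tfrac{d}{p+1}<1$, exactly the energy-subcritical condition, while in case (ii) it is the reason for introducing $\dl>0$, since $\al\to 1$ as $r\to \tfrac{2d}{d-2}$. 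I would then bound the nonlinearity pointwise in space. In case (i.a), where $s_1=0$ and $r=p+1$, this is the elementary identity $\|\N(w)\|_{L^{r'}_x} = \|w\|_{L^{p+1}_x}^p$. In case (ii), using that $p$ is an odd integer so that $\N(w)$ is a polynomial in $w,\cj w$, I would invoke the fractional Leibniz rule together with the Sobolev embedding $W^{s_1,r}(\R^d)\embeds L^{b}(\R^d)$, $b = \tfrac{(p-1)r}{r-2}$, to obtain $\|\N(w)\|_{W^{s_1,r'}_x}\les \|w\|_{W^{s_1,r}_x}^p$; the embedding requires $s_1\geq d(\tfrac1r - \tfrac1b)$, and a direct computation shows that at $r=\tfrac{2d}{d-2}$ this threshold equals $s_\textup{crit}-1$, which is why the hypothesis $s_1> s_\textup{crit}-1$ appears and leaves room to fix $\dl$ small.

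With the nonlinearity controlled by $\|w\|_{W^{s_1,r}_x}^p\les \|v\|_{W^{s_1,r}_x}^p + \|\Psi\|_{W^{s_1,r}_x}^p$, the remaining task is the time integration against the singular kernel $|t-t'|^{-\al}$. For the $v$-contribution I would pull out $\|v\|_{C_TW^{s_1,r}_x}^p$ and integrate the kernel, producing a factor $T^{1-\al}$. For the $\Psi$-contribution, available only in $L^q_TW^{s_1,r}_x$, I would instead apply H\"older's inequality in time, placing the kernel in $L^{q_1}_\tau$ and $\|\Psi\|^p$ in $L^{q/p}_\tau$ with $\tfrac1{q_1} + \tfrac pq = 1$; this requires $\al q_1<1$, which holds once $q$ is taken large enough (since $q_1\to 1$ as $q\to\infty$), and this is precisely the role of the condition $q\gg 1$. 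Both contributions yield a positive power $T^\theta$, giving the first estimate. The difference estimate \eqref{nonlin0b} follows identically after using the pointwise bound $|\N(a)-\N(b)|\les |a-b|\big(|a|^{p-1}+|b|^{p-1}\big)$ (and its fractional-derivative analogue via the Leibniz rule) with $a=v_1+\Psi$, $b=v_2+\Psi$, $a-b=v_1-v_2$.

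Finally, for the regularity statement $\G v\in C_TH^{s_1}$ I would switch from the dispersive estimate to the inhomogeneous Strichartz estimate \eqref{Str3}. The linear part lies in $C_TH^{s_0}\subset C_TH^{s_1}$. For the Duhamel part, the same fractional Leibniz and Sobolev bound gives $\N(v+\Psi)\in L^{\wt q'}_T W^{s_1,\wt r'}_x$ for the admissible pair $(\wt q,\wt r)=(2/\al,\,r)$, which is admissible precisely because $\al<1$ and $r\leq \tfrac{2d}{d-2}$, provided $q$ is large enough that $\Psi\in L^{p\wt q'}_T W^{s_1,r}_x$; applying \eqref{Str3} with the endpoint pair $(\infty,2)$ on the left then places $\jb{\nb}^{s_1}$ of the Duhamel integral in $L^\infty_TL^2_x$, and continuity in time is standard. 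The main obstacle throughout is the bookkeeping in case (ii): one must close the fractional Leibniz estimate in the $L^r$-based space exactly at the subcritical threshold $s_\textup{crit}-1$ while simultaneously keeping the time-integrability exponents compatible, so that the low-regularity stochastic convolution $\Psi$, available only in $L^q_T$, can be absorbed.
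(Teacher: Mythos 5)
Your proposal follows essentially the same route as the paper's proof: Sobolev embedding plus unitarity for the linear part, the dispersive estimate with the kernel $|t-t'|^{-\alpha}$, $\alpha = d(\tfrac12-\tfrac1r)<1$, combined with H\"older in time (choosing $q\gg1$ so the conjugate exponent keeps the kernel integrable) for the Duhamel term, the fractional Leibniz rule with the embeddings $H^{s_0}\subset W^{s_1,r}\subset L^{(p-1)r/(r-2)}$ in the energy-(super)critical case, and the nonhomogeneous Strichartz estimate with the admissible pair $(2/\alpha, r)$ for the $C_TH^{s_1}$ membership. The identification of the thresholds ($\alpha<1$ as the subcriticality condition, $s_{\textup{crit}}-1$ as the Sobolev-embedding threshold at $r=\tfrac{2d}{d-2}$) matches the paper, and the treatment of the difference estimate via the pointwise bound is the same in substance as the paper's mean-value-theorem computation.
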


Once we prove Proposition \ref{PROP:NL1}, 
Theorem \ref{THM:1} (i.a) and (ii) follow
from a standard contraction argument with Lemma \ref{LEM:stoconv}
(with  $s = s_\textup{crit} - 1+\eps$ for Theorem \ref{THM:1}\,(ii)).

\begin{proof}
We first consider the energy-subcritical case (i.a).
By Sobolev's inequality $H^{s_0}(\R^d) \subset L^{p+1}(\R^d)$
and the dispersive estimate \eqref{disp2}, we have 
\begin{align}
\| \G v \|_{C_TL_x^{p+1}}
 &\les \|S(t) u_0\|_{C_T H^{s_0}}  + \sup_{t\in [0,T]} \int_0^t \frac1{|t-t'|^{\frac{d}{2} - \frac{d}{p+1}}} 
\|   \N(v+\Psi) (t')\|_{L_x^{\frac{p+1}{p}}} dt'\notag\\
&\les \|u_0\|_{H^{s_0}} + T^\theta \|   v+\Psi \|_{L^q_T L_x^{p+1}}^p \notag\\
&\les \|u_0\|_{H^{s_0}} 
+ T^\theta \Big(
\|  v \|_{C_T L_x^{p+1}}^p + \|  \Psi \|_{L^q_T L_x^{p+1}}^p \Big)
\label{nonlin1}
\end{align}

\noi
for some $q\gg 1$ and small $\theta > 0$, 
provided that ${s_0} \geq \frac d2 - \frac d {p+1}$
and $\frac{d}{2} - \frac{d}{p+1} <1$,
namely $p < 1 + \frac{4}{d-2}$.
When $p$ is an odd integer, a similar computation yields the following difference estimate:
\begin{align}
\| \G v_1 - \G v_2 \|_{C_TL_x^{p+1}}
 &\les T^\theta \|   \N(v_1+\Psi) -  \N(v_2+\Psi) \|_{L^q_T L_x^{\frac{p+1}{p}}}\notag\\
&\les   T^\theta \Big(
\|  v_1 \|_{C_T L_x^{p+1}}^{p-1}
+ \|  v_2 \|_{C_T L_x^{p+1}}^{p-1}
 + \|  \Psi \|_{L^q_T L_x^{p+1}}^{p-1} \Big)
\|  v_1 - v_2 \|_{C_T L_x^{p+1}}.
\label{nonlin2}
\end{align}

\noi
Next, we consider the case when   $p >1$ is not an odd integer.
By the mean value theorem, we have
\begin{align}
\N(v_1 + \Psi) - \N(v_2 + \Psi)  
= \int_0^1  \Big\{ & \dd_z \N(v_2 + \Psi + \theta(v_1 - v_2)) (v_1 - v_2) \notag\\
& + \dd_{\cj{z}} \N(v_2 + \Psi + \theta(v_1 - v_2)) (\cj{v_1 - v_2})\Big\} d\theta.
\label{nonlin3}
\end{align}

\noi
With $\N(z) = i|z|^{p-1} z$, we have
\begin{align}
\textstyle \dd_z \N(z) = i \frac{p+1}{2} |z|^{p-1}\qquad \text{and}
\qquad  \dd_{\cj z} \N(z) = i \frac{p-1}{2} |z|^{p-1}\frac{z^2}{|z|^2}.
\label{nonlin4}
\end{align}

\noi
Then, by repeating the computation above with \eqref{nonlin3} and \eqref{nonlin4}, 
we obtain \eqref{nonlin2}.
Given $p + 1 < \frac {2d}{d-2}$, let $(\wt q, p+1)$ be Schr\"odinger admissible.
Then, it follows from \eqref{Str3} that 
\begin{align*}
\| \G v - S(t) u_0 \|_{C_TL_x^2}
 &\les
   \|   \N(v+\Psi) \|_{L^{\wt q'}_TL_x^{\frac{p+1}{p}}} \notag\\
&\les 
 T^\theta \Big(
\|  v \|_{C_T L_x^{p+1}}^p + \|  \Psi \|_{L^q_T L_x^{p+1}}^p \Big)
\end{align*}

\noi
for some $q\gg 1$ and small $\theta > 0$.
Hence, we conclude that 
 $\G v \in C([0,T]; L^2(\R^d))$ 
for all $v \in C([0,T]; L^{p+1}(\R^d))$ and $T>0$.

Next, we consider the energy-(super)critical case (ii): $p \geq 1 + \frac{4}{d-2}$ when $d \geq 3$.
Since   $s_0 -  1 \geq  s_1> s_\text{crit} - 1 = \frac{d-2}{2} - \frac{2}{p-1}$, 
we can choose $\dl > 0$ sufficiently small such that 
\begin{align}
H^{s_0}(\R^d) \subset W^{s_1,r}(\R^d)\subset L^{\frac{(p-1)r}{r-2}}(\R^d),
\label{Sob1}
\end{align}

\noi
where $ r = \frac{2d}{d-2} - \dl$.
Since $p$ is an odd integer, the nonlinearity $\N(u)$ is algebraic 
and hence we can apply
the fractional Leibniz rule.
Then, proceeding with 
the dispersive estimate~\eqref{disp2}, the fractional Leibniz rule, 
and \eqref{Sob1}, we have
\begin{align}
\| \G v \|_{C_T W^{s_1, r}_x}
 &\les \|S(t) u_0\|_{C_T H^{s_0}}  
 + T^\theta
\|  \jb{\nb}^{s_1} \N(v+\Psi)\|_{L^\frac{q}{p}_T L_x^{r'}}\notag\\
&\les \|u_0\|_{H^{s_0}} 
 + T^\theta
\|  \jb{\nb}^{s_1} (v+\Psi)\|_{L^q_T L_x^{r}}
\|  v+\Psi\|_{L^q_T L_x^{\frac{(p-1)r}{r-2}}}^{p-1}\notag\\
&\les \|u_0\|_{H^{s_0}} 
 + T^\theta
\| v+\Psi\|_{L^q_T W_x^{s_1, r}}^p\notag\\
&\les \|u_0\|_{H^{s_0}} 
 + T^\theta\Big(
\| v\|_{C_T W_x^{s_1, r}}^p
+ \| \Psi\|_{L^q_T W_x^{s_1, r}}^p\Big)
\label{nonlin5}
\end{align}

\noi
for some $q \gg1 $ and small $\theta > 0$.
The difference estimate \eqref{nonlin0b} follows in a similar manner.
Given $ r = \frac{2d}{d-2} - \dl$, let $(\wt q, r)$ be Schr\"odinger admissible.
Then, proceeding as in \eqref{nonlin5} with~\eqref{Str3}, we have
\begin{align*}
\| \G v - S(t) u_0 \|_{C_T H^{s_1}_x}
 &\les 
\|  \jb{\nb}^{s_1} \N(v+\Psi)\|_{L^{\wt q'}_T L_x^{r'}}\notag\\
&\les 
  T^\theta\Big(
\| v\|_{C_T W_x^{s_1, r}}^p
+ \| \Psi\|_{L^q_T W_x^{s_1, r}}^p\Big)
\end{align*}

\noi
for some $q \gg1 $ and small $\theta > 0$.
This shows 
 $\G v \in C([0,T]; H^{s_1}(\R^d))$ 
for all $v \in C([0,T]; W^{s_1,  r}(\R^d))$ and $T>0$.
\end{proof}

Similarly, 
Theorem \ref{THM:1} (i.b)
follows from the following proposition.
In order to control the linear solution at a lower regularity, 
we apply the Strichartz estimate \eqref{Str1}.

\begin{proposition}\label{PROP:NL2}
Let $d$ and $p$ be as in Theorem \ref{THM:1} (i.b)
and $s_0> s_\textup{crit}$.
Then, the following estimates hold for some $q\gg 1$:
\begin{align}
\| \G v \|_{L^q_T L^{p+1}_x}
&\les \|u_0\|_{H^{s_0}} 
 + T^\theta\Big(
\| v\|_{L^q_T L^{p+1}_x}^p
+ \| \Psi\|_{L^q_T L^{p+1}_x}^p\Big), \notag
\\
\| \G v_1 - \G v_2  \|_{L^q_T L^{p+1}_x}
&\les 
  T^\theta\Big(
\| v_1\|_{L^q_T L^{p+1}_x}^{p-1}
+ \| v_2\|_{L^q_T L^{p+1}_x}^{p-1}
+ \| \Psi\|_{L^q_T L^{p+1}_x}^{p-1}\Big)
\| v_1 - v_2\|_{L^q_T L^{p+1}_x}
\label{nonlin6}
\end{align}

\noi
for all $v, v_1, v_2 \in L^q([0,T]; L^{p+1}(\R^d))$ and $T>0$.
Moreover, we have $\G v \in C([0,T]; L^2(\R^d))$ 
for all $v \in L^q([0,T]; L^{p+1}(\R^d))$ and $T>0$.

\end{proposition}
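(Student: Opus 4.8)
The plan is to solve the fixed point problem for $v$ directly in the single space $L^q_T L^{p+1}_x$, choosing the (non-admissible) time exponent $q$ large enough to render the Duhamel term subcritical, while still controlling the free evolution $S(t)u_0$ at the regularity $s_0 > s_\textup{crit}$. Write $\al = d\big(\tfrac12 - \tfrac{1}{p+1}\big) = \tfrac{d(p-1)}{2(p+1)}$ for the exponent appearing in the dispersive estimate \eqref{disp2} at $r = p+1$, and $\s_1 = \tfrac d2 - \tfrac{d}{p+1}$ for the Sobolev exponent of the embedding $H^{\s_1}(\R^d) \subset L^{p+1}(\R^d)$. The energy-subcritical hypothesis ($p < 1 + \tfrac{4}{d-2}$ when $d \ge 3$, and $p < \infty$ when $d \le 2$) is precisely what guarantees $0 < \al < 1$, which is what makes the time integral below summable with a positive power of $T$.

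The decisive point is the elementary identity
\[
\frac{p-1}{1-\al} \;=\; \frac{2}{\s_1 - s_\textup{crit}},
\]
both sides equaling $\tfrac{2(p+1)(p-1)}{2(p+1)-d(p-1)}$. Since $s_0 > s_\textup{crit}$, this makes the interval $\big(\tfrac{p-1}{1-\al},\, \tfrac{2}{\s_1-s_0}\big]$ nonempty, and I would fix any $q$ in it. One checks that the mass-supercritical assumption $p > 1 + \tfrac4d$ forces $q > p$ (indeed $\tfrac{p-1}{1-\al} > p$ there), so that all the Hölder and Young bookkeeping below is legitimate.

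For the linear term I would invoke the Strichartz estimate \eqref{Str1}: taking $(q,r)$ to be the admissible pair sharing this $q$, so that $\tfrac1r = \tfrac12 - \tfrac{2}{dq}$, and combining it with the Sobolev embedding $W^{\s,r}(\R^d) \subset L^{p+1}(\R^d)$, where $\s = \s_1 - \tfrac2q \le s_0$ by the upper bound on $q$, one obtains $\| S(t)u_0\|_{L^q_T L^{p+1}_x} \le \|S(t)u_0\|_{L^q_T W^{\s,r}_x} \les \|u_0\|_{H^{\s}} \le \|u_0\|_{H^{s_0}}$. For the Duhamel term I would apply \eqref{disp2} with $r = p+1$ pointwise in $t$, bounding the $L^{p+1}_x$ norm of $\int_0^t S(t-t')\N(w)(t')\,dt'$, with $w = v + \Psi$, by $C\int_0^t |t-t'|^{-\al}\,\|w(t')\|_{L^{p+1}_x}^p\,dt'$; taking the $L^q_T$ norm and applying Young's inequality in time to the kernel $|t|^{-\al}\ind_{[0,T]} \in L^a$ (with $\tfrac1a = 1 - \tfrac{p-1}q > \al$, the last inequality being the lower bound on $q$) produces the factor $T^\theta$ with $\theta = \tfrac1a - \al > 0$, and hence the bound $T^\theta\big(\|v\|_{L^q_T L^{p+1}_x}^p + \|\Psi\|_{L^q_T L^{p+1}_x}^p\big)$. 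The difference estimate \eqref{nonlin6} follows identically after the pointwise bound $|\N(w_1)-\N(w_2)| \les (|w_1|^{p-1} + |w_2|^{p-1})|w_1-w_2|$ (via the mean value theorem, as in Proposition \ref{PROP:NL1} when $p$ is not an odd integer) and distributing the exponents by Hölder. Finally, for the membership $\G v \in C_T L^2_x$ I would use the nonhomogeneous Strichartz estimate \eqref{Str3} with the admissible pairs $(\infty,2)$ and $(q_0, p+1)$; here $p > 1 + \tfrac4d$ again yields $q > p q_0'$, so Hölder in time absorbs $\N(w)$ into $L^q_T L^{p+1}_x$, and continuity in time is recovered by the standard approximation argument.

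The main obstacle is the tight interlocking of the exponents: a single $q$ must simultaneously (a) be large enough that the Duhamel operator gains a power of $T$ (i.e.\ $q > \tfrac{p-1}{1-\al}$) and (b) be small enough that the free evolution is controlled by $\|u_0\|_{H^{s_0}}$ (i.e.\ $q \le \tfrac{2}{\s_1 - s_0}$). The identity above shows these two requirements are compatible exactly when $s_0 > s_\textup{crit}$, which is the conceptual heart of part (i.b); checking that the remaining side conditions $q > p$ and $q > p q_0'$ hold automatically is the one place where the mass-supercritical hypothesis is genuinely used.
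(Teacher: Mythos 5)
Your proposal is correct and follows essentially the same route as the paper: the linear term is handled by the Strichartz estimate for an admissible pair $(q,r)$ combined with Sobolev embedding into $L^{p+1}_x$, the Duhamel term by the dispersive estimate at $r=p+1$ followed by Young's inequality in time with a non-admissible $q$ chosen just above $\tfrac{p-1}{1-\al}$ (the paper encodes the same choice via the relation $\tfrac1q+1=\big(\tfrac d2-\tfrac d{p+1}+\eps\big)+\tfrac pq$), and the $C_TL^2_x$ membership by the nonhomogeneous Strichartz estimate using $q>p\,\wt q'$ from mass-supercriticality. Your explicit identity $\tfrac{p-1}{1-\al}=\tfrac{2}{\s_1-s_{\textup{crit}}}$ merely makes transparent the exponent bookkeeping that the paper leaves implicit.
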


\begin{proof}
Given $(d, p)$, fix $q \geq 2$ such that 
$\frac{1}{q} + 1 = \Big( \frac{d}{2} - \frac{d}{p+1}+\eps \Big) + \frac{p}{q}$
for some small $\eps > 0$.
Furthermore, let $r \geq 2$ such that $(q, r)$ is Schr\"odinger admissible.
Then, by Sobolev's inequality, we have 
\begin{align}
W^{s_0, r}(\R^d) \subset L^{p+1}(\R^d).
\label{Sob2}
\end{align}

\noi
By proceeding as in \eqref{nonlin1} with 
\eqref{Sob2}, the dispersive estimate \eqref{disp2}, 
the Strichartz estimate~\eqref{Str1},  and Young's inequality, we have
\begin{align}
\| \G v \|_{L^q_TL_x^{p+1}}
 &\les \|S(t) u_0\|_{L^q_T W^{s_0, r}}  + \bigg\| \int_0^t \frac1{|t-t'|^{\frac{d}{2} - \frac{d}{p+1}}} 
\|   \N(v+\Psi) (t')\|_{L_x^{\frac{p+1}{p}}} dt' \bigg\|_{L^q_T}\notag\\
&\les \|u_0\|_{H^{s_0}} 
+ T^\theta \Big(
\|  v \|_{L^q_T L_x^{p+1}}^p + \|  \Psi \|_{L^q_T L_x^{p+1}}^p \Big)
\label{ZZ1}
\end{align}

\noi
for some $\theta > 0$.
The difference estimate \eqref{nonlin6} follows in a similar manner.
Given $p + 1 < \frac {2d}{d-2}$, 
let $(\wt q, p+1)$ be Schr\"odinger admissible.
Then, from the mass-supercritical condition: $p > 1 + \frac 4d$, 
we see that $q > p\, \wt q'$.
Hence, 
it follows from \eqref{Str3} that 
\begin{align*}
\| \G v - S(t) u_0 \|_{C_TL_x^2}
 &\les   \|   \N(v+\Psi) \|_{L^{\wt q'}_TL_x^{\frac{p+1}{p}}} \notag\\
&\les  T^\theta \Big(
\|  v \|_{L^q_T L_x^{p+1}}^p + \|  \Psi \|_{L^q_T L_x^{p+1}}^p \Big)
\end{align*}

\noi
for some small $\theta > 0$.
\end{proof}

\begin{remark}\label{REM:endpoint}\rm
In \eqref{ZZ1}, it is possible to  apply Hardy-Littlewood-Sobolev's inequality
instead of Young's inequality since $q < \infty$.
Namely, proceeding with $\eps = 0$, 
Hardy-Littlewood-Sobolev's inequality gives
\begin{align*}
\| \G v \|_{L^q_TL_x^{p+1}}
 &\les \Big(\|S(t) u_0\|_{L^q_T W^{s_0, r}} 
 + \|  \Psi \|_{L^q_T L_x^{p+1}}^p \Big)
 + \|  v \|_{L^q_T L_x^{p+1}}^p. 
\end{align*}

\noi
The difference estimate \eqref{nonlin6} also holds
without the  $T^\theta$-factor.
Then, we can carry out a contraction argument by 
making $T  = T_\o> 0$ sufficiently small such that 
\[\|S(t) u_0\|_{L^q_T W^{s_0, r}} 
 + \|  \Psi \|_{L^q_T L_x^{p+1}}^p \ll 1,\]
 
 \noi
 as in the mass-critical local well-posedness theory for NLS \eqref{NLS1}, 
 and prove local well-posedness of \eqref{SNLS1} even when $s_0 = s_\text{crit}$.
As the argument is standard, we omit details.
Note that we  measure the stochastic convolution $\Psi$ only with the subcritical $L^{p+1}_x$-norm.

\end{remark}

\subsection{Proof of Theorem \ref{THM:2}}
Given $u_0$ on $\R^d$, let $u_0^\o$ be its Wiener randomization defined in~\eqref{gauss4}.
Then,  we define $\wt \G = \wt \G_{u_0^\o, \phi, \,\xi}$ by 
\begin{equation*}
\wt \G v(t): =  \int_0^t S(t-t') \N (v+\wt \Psi)(t') dt',
\end{equation*}

\noi
where $\wt \Psi$ is the stochastic convolution defined in \eqref{SNLS5}
such that $\wt \Psi|_{t = 0} = u_0^\o$.
Then, by proceeding as in 
the proof of  Proposition \ref{PROP:NL1}, 
we obtain the following nonlinear estimates.

\begin{proposition}\label{PROP:NL3}
Let $d$ and $p$ be as in Theorem \ref{THM:1}.
We set
\begin{itemize}
\item[\textup{(i)}] $s = 0$ and $r = p+1$ in the energy-subcritical case, 

\item[\textup{(ii)}] $s > s_\textup{crit} - 1$ and $r = \frac{2d}{d-2}-\dl$ 
for some small $\dl = \dl(s)> 0$ in the energy-(super)critical case.

\end{itemize}

\noi
Then, the following estimates hold for some $q\gg 1$:
\begin{align*}
\| \wt \G v \|_{C_T W^{s, r}_x}
&\les T^\theta\Big(
\| v\|_{C_T W_x^{s, r}}^p
+ \| \wt \Psi\|_{L^q_T W_x^{s, r}}^p\Big), 
\\
\| \wt \G v_1 - \wt \G v_2  \|_{C_T W^{s, r}_x}
&\les 
  T^\theta\Big(
\| v_1\|_{C_T W_x^{s, r}}^{p-1}
+ \| v_2\|_{C_T W_x^{s, r}}^{p-1}
+ \| \wt \Psi\|_{L^q_T W_x^{s, r}}^{p-1}\Big)
\| v_1 - v_2\|_{C_T W_x^{s, r}}
\end{align*}

\noi
for all $v, v_1, v_2 \in C([0,T]; W^{s,r}(\R^d))$.
\end{proposition}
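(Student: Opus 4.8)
The plan is to prove Proposition \ref{PROP:NL3} by reducing it directly to the argument already carried out in the proof of Proposition \ref{PROP:NL1}, since the two statements differ only by the replacement of $\Psi$ with $\wt\Psi = S(t) u_0^\o + \Psi$ and by the absence of the initial-data term $\|u_0\|_{H^{s_0}}$ in the homogeneous estimate (here the entire linear contribution has been absorbed into $\wt\Psi$). Indeed, for $\wt\G$ there is no free linear evolution outside the Duhamel integral, so the only change is cosmetic: wherever the earlier proof estimated the Duhamel term $\int_0^t S(t-t')\N(v+\Psi)\,dt'$, I would now estimate $\int_0^t S(t-t')\N(v+\wt\Psi)\,dt'$, with $\wt\Psi$ playing exactly the role that $\Psi$ played before. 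The key observation making this substitution legitimate is that, by the probabilistic Strichartz estimates (Lemma \ref{LEM:PStr1}) together with Lemma \ref{LEM:stoconv}, the random linear solution $S(t)u_0^\o$ enjoys the same space-time integrability $L^q_T W^{s,r}_x$ as $\Psi$ for any finite $q,r$ in the admissible range; consequently $\wt\Psi \in L^q([0,T]; W^{s,r}(\R^d))$ almost surely, exactly as required.

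Concretely, in the energy-subcritical case (i) I would repeat the chain of inequalities in \eqref{nonlin1}, applying the dispersive estimate \eqref{disp2} to the Duhamel integral, H\"older in time to produce the $T^\theta$ gain, and the pointwise-in-time bound $\|\N(w)\|_{L^{(p+1)/p}_x} \les \|w\|_{L^{p+1}_x}^p$, now with $w = v + \wt\Psi$. The triangle inequality $\|v+\wt\Psi\|^p \les \|v\|^p + \|\wt\Psi\|^p$ then yields the stated homogeneous bound. For the difference estimate, when $p$ is an odd integer a direct expansion suffices, and when $p$ is not an odd integer I would invoke the mean value theorem decomposition \eqref{nonlin3}--\eqref{nonlin4} verbatim, with $v_2 + \wt\Psi + \theta(v_1-v_2)$ in place of $v_2 + \Psi + \theta(v_1-v_2)$; the factor $|z|^{p-1}$ is controlled in $L^{p+1}_x$ as before. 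In the energy-(super)critical case (ii), where $p$ is an odd integer so $\N$ is algebraic, I would reproduce \eqref{nonlin5}: apply \eqref{disp2}, then the fractional Leibniz rule to distribute $\jb{\nb}^{s}$, then the Sobolev embedding \eqref{Sob1} with $r = \frac{2d}{d-2}-\dl$ to close the H\"older split between one factor in $L^q_T L^r_x$ carrying the derivative and $p-1$ factors in $L^q_T L^{(p-1)r/(r-2)}_x$, again with $w = v+\wt\Psi$.

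There is essentially no genuine obstacle here, and the proof is legitimately a one-line reduction; the only point requiring a word of care is to confirm that $\wt\Psi$ has the claimed regularity. I would therefore state explicitly at the outset that $\wt\Psi \in L^q([0,T]; W^{s,r}(\R^d))$ almost surely for every finite $q$ and every $r$ in the relevant range (this is precisely the combination of Lemma \ref{LEM:stoconv}\,(ii) for $\Psi$ and the probabilistic Strichartz estimate of Lemma \ref{LEM:PStr1} for $S(t)u_0^\o$), and that no term analogous to $\|u_0\|_{H^{s_0}}$ survives because the linear data now sits inside $\wt\Psi$. After recording this, the entire estimate is identical, line for line, to the proof of Proposition \ref{PROP:NL1}, so I would simply write that the nonlinear estimates follow by repeating that argument with $\Psi$ replaced by $\wt\Psi$ throughout. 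Theorem \ref{THM:2} then follows from Proposition \ref{PROP:NL3} by the same contraction-mapping argument used to deduce Theorem \ref{THM:1}, now applied to $\wt\G$ on a ball in $C([0,T]; W^{s,r}(\R^d))$ with $T = T_\o$ chosen small enough almost surely.
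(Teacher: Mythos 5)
Your proposal is correct and matches the paper's own treatment: the paper likewise proves Proposition \ref{PROP:NL3} by simply repeating the argument of Proposition \ref{PROP:NL1} with $\Psi$ replaced by $\wt\Psi$, using Lemmas \ref{LEM:stoconv} and \ref{LEM:PStr1} to confirm that $\wt\Psi$ enjoys the same $L^q_T W^{s,r}_x$ bounds. Your explicit remarks on the disappearance of the $\|u_0\|_{H^{s_0}}$ term and on the regularity of $\wt\Psi$ are exactly the points the paper records.
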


Next, let us state the following probabilistic Strichartz estimates.
See \cite{BOP1} for the proof.

	\begin{lemma}\label{LEM:PStr1}
Let $s \in \R$. Given $u_0$ on $H^s(\R^d)$, 
let $u_0^\o$ be its randomization defined in \eqref{gauss4}, satisfying~\eqref{exp1}.
Then, the following statements hold almost surely:

\smallskip

\begin{itemize}
\item[\textup{(i)}]
$S(t)u_0^\o  \in C(\R; H^s(\R^d))$, 

\smallskip
\item[\textup{(ii)}]  
Given finite   $q, r \geq 2$, 
we have
$S(t) u_0^\o \in L^q([0, T];  W^{s, r}(\R^d))$
for any $T > 0$.

\end{itemize}

\end{lemma}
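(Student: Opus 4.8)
The plan is to reduce both almost sure statements to a single $L^\rho(\O)$ moment bound, in which the randomness is decoupled from the space-time integrability by a Khintchine-type inequality and the gain of integrability in $r$ is extracted from the unit-scale frequency localization via Bernstein's inequality. The probabilistic input I would record first is the following consequence of the exponential moment bound \eqref{exp1}: there is an absolute constant $C$ such that, for every $\rho \geq 2$ and every $\{c_n\}_{n \in \Z^d} \in \l^2$,
\[
\Big\| \sum_{n \in \Z^d} g_n c_n \Big\|_{L^\rho(\O)} \leq C \sqrt{\rho}\, \Big( \sum_{n \in \Z^d} |c_n|^2 \Big)^{\frac12}.
\]
This is standard and follows from \eqref{exp1} by the usual sub-Gaussian argument (the moment generating function of $\sum_n g_n c_n$ is controlled, which converts to the displayed moment growth).

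For (ii), I would fix finite $q, r \geq 2$ and $T > 0$, take $\rho \geq \max(q, r)$, and estimate $\big\| \|S(t) u_0^\o\|_{L^q_T W^{s,r}_x}\big\|_{L^\rho(\O)}$. First Minkowski's integral inequality pulls $L^\rho(\O)$ inside $L^q_T L^r_x$; then the Khintchine inequality above, applied pointwise in $(t,x)$ to the series $\jb{\nb}^s S(t) u_0^\o = \sum_n g_n\, \jb{\nb}^s S(t)\psi(D-n) u_0$, yields the deterministic square-function bound
\[
\big\| \|S(t) u_0^\o\|_{L^q_T W^{s,r}_x}\big\|_{L^\rho(\O)} \les \sqrt{\rho}\, \Big\| \Big( \sum_{n} |\jb{\nb}^s S(t)\psi(D-n) u_0|^2 \Big)^{\frac12} \Big\|_{L^q_T L^r_x}.
\]
Since $r \geq 2$, Minkowski again moves the $\l^2_n$ sum outside $L^r_x$. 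The key step is now that each summand $\jb{\nb}^s S(t)\psi(D-n) u_0$ has Fourier support in the unit cube $Q_n$, so Bernstein's inequality gives $\|\jb{\nb}^s S(t)\psi(D-n) u_0\|_{L^r_x} \les \|\jb{\nb}^s S(t)\psi(D-n) u_0\|_{L^2_x}$ with a constant independent of $n$ (translating the cube does not change the Bernstein constant). Because $S(t)$ is unitary on $L^2$ and commutes with $\jb{\nb}^s$ and $\psi(D-n)$, this $L^2_x$ norm is independent of $t$ and equals $\|\jb{\nb}^s\psi(D-n)u_0\|_{L^2_x}$; summing in $n$ and using the bounded overlap of the symbols $\psi(\cdot - n)$ with Plancherel gives $\sum_n \|\jb{\nb}^s\psi(D-n)u_0\|_{L^2_x}^2 \les \|u_0\|_{H^s}^2$. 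The surviving $L^q_T$ norm of a $t$-independent quantity contributes only $T^{\frac1q}$, so
\[
\big\| \|S(t) u_0^\o\|_{L^q_T W^{s,r}_x}\big\|_{L^\rho(\O)} \les \sqrt{\rho}\, T^{\frac1q}\, \|u_0\|_{H^s} < \infty.
\]
Finiteness of this moment forces $\|S(t) u_0^\o\|_{L^q_T W^{s,r}_x} < \infty$ almost surely, which is (ii).

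For (i), I would run the same computation with $r = 2$ applied to the tails of the defining series, showing that the partial sums of $u_0^\o$ form a Cauchy sequence in $L^\rho(\O; H^s)$, so that $u_0^\o \in H^s(\R^d)$ almost surely; continuity of $t \mapsto S(t)u_0^\o$ in $H^s$ then follows immediately from the strong continuity of the unitary group $\{S(t)\}$ on $H^s$. The main obstacle, and really the only substantive point, is the gain of integrability: it is precisely the unit-scale localization that makes the Bernstein step $L^r_x \mapsto L^2_x$ lossless with a constant uniform in $n$, and it is this—together with the Khintchine decoupling of the Gaussians—that removes the Schr\"odinger admissibility restriction on $(q, r)$ present in the deterministic estimate \eqref{Str1}.
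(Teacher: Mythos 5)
Your proof is correct, and it is essentially the standard argument for this lemma: the paper itself gives no proof but refers to \cite{BOP1}, where exactly this combination of Minkowski's integral inequality, the Khintchine-type moment bound derived from \eqref{exp1}, Bernstein's inequality on the unit cubes $Q_n$ (uniform in $n$ by modulation invariance), and the unitarity of $S(t)$ on $L^2$ is used. All the steps check out, including the almost-orthogonality bound $\sum_n \|\jb{\nb}^s\psi(D-n)u_0\|_{L^2_x}^2 \les \|u_0\|_{H^s}^2$ and the reduction of part (i) to $u_0^\o \in H^s(\R^d)$ almost surely plus strong continuity of the group.
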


In particular, Lemmas \ref{LEM:stoconv} and \ref{LEM:PStr1} 
state that the new stochastic convolution $\wt \Psi = S(t) u_0^\o+ \Psi$
also satisfies the conclusion of Lemma \ref{LEM:stoconv}.
Therefore, together with this observation,
  Proposition~\ref{PROP:NL3} implies
 Theorem \ref{THM:2}.

\begin{acknowledgment}

\rm 
T.\,O.~was supported by the European Research Council (grant no.~637995 ``ProbDynDispEq'').

\end{acknowledgment}

\end{document}